\documentclass{article}
\usepackage{graphicx} 
\usepackage{authblk}
\usepackage{algorithm}
\usepackage{algpseudocode}
\usepackage[a4paper, total={5.65in, 9in}]{geometry}
\usepackage{placeins}
\usepackage{mathrsfs}
\usepackage{tikz}
\usepackage{hyperref}
\usepackage{amsmath}
\usepackage{float}
\usepackage{amssymb}
\usepackage{amsthm}
\def\C{\mathbb{C}}
\def\Z{\mathbb{Z}}
\def\Q{\mathbb{Q}}
\def\OO{\mathcal{O}}

\title{\textbf{{\Large Enhanced Algorithms for the Representation of integers by Binary Quadratic forms: Reduction to Subset Sum}}}
\date{}
\author{\small MAHER MAMAH}
\affil{{\small Department of Mathematics, The Pennsylvania State University, University Park, USA}}
\newtheorem{theorem}{Theorem}[section]   
\newtheorem{lemma}[theorem]{Lemma}       

\usepackage{titlesec}
\titleformat{\section}
  {\small\fontsize{12}{16}\selectfont\centering} 
  {\thesection.}{1em}{}
\usepackage{hyperref}
\usepackage{arydshln} 
\usepackage[utf8]{inputenc}
\usepackage{abstract}
\usepackage{mathrsfs}
\usepackage{titlesec}
\titleformat{\subsection}[runin]
  {\normalfont\large}  
  {\thesubsection.}{1em}{}
\newcommand{\orthsum}{\mathbin{\text{\footnotesize$\bigcirc$\kern-0.87em$\perp$}}}
\begin{document}
\maketitle
\begin{center}
\begin{minipage}{0.9\textwidth}
    {\small\textbf{Abstract.}  In this paper, we give efficient algorithms for solving the Diophantine equation \( f(x, y) = m \) for arbitrary  definite binary quadratic form \( f\), given the factorization of \( m \). While Cornacchia’s algorithm to solve $x^2+dy^2=m$ provides an efficient method in many cases, its running time becomes exponentially large when \( m \) is highly composite, and inherits some subtleties when generalizing to arbitrary form $f$. To address these issues, we show how to reduce the problem to an instance of the Subset-Sum, a weakly NP-complete problem, allowing for more efficient solutions. Leveraging this approach, we develop deterministic algorithms that adapt to different cases based on \( \mathrm{disc}(f) \) and \( m \). In particular, when \( |\mathrm{disc}(f)| = \mathrm{polylog}(m) \), we provide a polynomial-time solution that remains efficient regardless of the structure of \( m \). For more general cases, we present an algorithm that improves upon Cornacchia’s method, achieving a quadratic speedup. Recently, the problem of representation by a form $f$ found important applications in elliptic curves and isogeny-based cryptography, where these algorithms are central to solving norm form equations.}
\end{minipage}
\end{center}
\section{\textbf{Introduction}}
Problems arising in algebraic number theory have long been an interest of the mathematical community due to their rich theoretical structure and their diverse applications in cryptography, coding theory, and beyond. One interesting problem that received much attention since the time of Gauss is that of finding solutions to Diophantine equations, and particularly quadratic forms. The simplest of all is the Pell-type equation $x^2+dy^2=m$ where $d$ and $m$ are positive integers, and finding integral solutions to this equation was believed to be at least as hard as factoring $m$. 

In 1908, Cornacchia \cite{Cor08} gave a deterministic algorithm with $\mathcal{O}(\log^2m)$ time complexity that solves this problem when $m$ is a prime. His method begins by computing square roots \(r^2 \equiv -d \pmod{m}\) and then performs a \(\gcd\)-like iterative process until the output is less than \(\sqrt{m}\) satisfying certain conditions. His approach was later adapted to handle composite values of \(m\) with only minor modifications, given the prime factorization of $m$. However, this generalization to handle composite \(m\) introduces a significant challenge when \(m\) is highly composite, i.e. has ``too many" prime factors. Since the algorithm computes square roots until one yields the correct solution—or, in the worst case, determines that no solution exists—it may be necessary to iterate over all \(2^{\omega(m)}\) possible square roots. As \(\omega(m)\) (the number of distinct prime factors of \(m\)) can be as large as \(\mathcal{O}(\log m/\log\log m)\), the number of iterations may become exponential in \(\log m\) before the algorithm terminates. This has proven problematic in many applications, particularly in isogeny-based cryptography, where no efficient classical or quantum algorithm currently exists to circumvent this issue.
For instance, one of the important tasks in isogeny-based cryptography is to solve the quaternionic analogue of the \textit{supersingular isogeny path problem} of two supersingular elliptic curves $E$, and $E'$. Namely given a left $\mathrm{End}(E)$-ideal $I$ of norm $N$, where End($E$) denotes the endomorphism ring of $E$ in a quaternion algebra, find another equivalent ideal of some norm $d$. It turns out that this problem reduces to solving $q_I(x_1,x_2,x_3,x_4)=d$ where $q_I$ is a quaternary integral positive definite form. The approach to solve this $q_I(x)=d$ is generally heuristic by choosing fixed values for $x_1$ and $x_2$ and then solve the remaining binary quadratic form using Cornacchi's algorithm. This is the main technique used in [\cite{KLPT14}, \cite{ACD24}, \cite{BKM23}, \cite{EL24}]. \\

\textbf{Contributions:} In this paper, we propose multiple algorithms that outperform Cornacchia's algorithm for all orders of $d$ and $m$. The approach to that is leveraging a new reduction of solving the Diophantine quadratic equation $f(x,y)=m$ to an instance of a group subset sum problem. Algorithm \ref{alg1}, which works for arbitrary $m$, is mainly efficient in the case $\mathrm{disc}(f)=\mathrm{polylog}(m)$, and generally has lower time complexity, compared to the Cornacchia's algorithm, for $|\mathrm{disc}(f)|\ll 2^{2\omega(m)}$. Algorithm \ref{alg2}, and theorem \ref{alg3} work for generic values of $d$ and $m$ and achieve a quadratic speed-up over Cornacchia's algorithm using the meet-in-the-middle approach. \\

The paper is structured as follows. Section \ref{section2} provides background information on algebraic number theory and binary quadratic forms. Section \ref{section3} examines the performance of Cornacchia's algorithm and its generalization. Section \ref{section4} introduces the subset sum problem and presents some of the most effective generic algorithms for solving it. Finally, Section \ref{section5} discusses the reduction of our Diophantine equation to a group instance of the subset sum problem and introduces the enhanced algorithms.

\section{\textbf{Background on Algebraic number theory}}\label{section2}
A number field $K$ can be defined as a subfield of the complex numbers $\C$ that is generated over the field of rational numbers $\Q$ by an algebraic number $\alpha$; i.e $K=\Q(\alpha)$, where $\alpha$ is a root of an irreducible polynomial with rational coefficients. The degree of of $K$ over $\Q$, denoted $[K:\Q]$, is the dimension of $K$ as a $\Q$-vector space, and $\mathrm{disc}(K)$ is called the discriminant of $K$.

For a field $K$, the set of elements that are roots of monic irreducible polynomials with integer coefficients forms a ring, called the ring of integers of $K$, and denoted $\mathcal{O}_K$. The ring of integers is essentially a generalization of $\Z\subset \Q$. For a quadratic field $K=\Q(\sqrt{d})$, where $d$ is square-free, the corresponding ring of integers is \[\mathcal{O}_K=\begin{cases}
    \Z[\sqrt{d}] &\text{if ~} d\equiv 2,3\pmod 4\\
    \Z[\frac{1+\sqrt{d}}{2}]& \text{if } d\equiv1\pmod 4.
\end{cases}\]
The ring of integers \( \mathcal{O}_K \) need not be a unique factorization domain (UFD), however, it remains a Dedekind domain. Consequently, every  $\mathcal{O}_K$-ideal factors uniquely as a product of prime ideals. Specifically, any nonzero ideal \( \mathfrak{a} \subseteq \mathcal{O}_K \) can be written uniquely as

\[
\mathfrak{a} = \mathfrak{p}_1^{e_1} \mathfrak{p}_2^{e_2} \cdots \mathfrak{p}_r^{e_r},
\]

where the \( \mathfrak{p}_i \) are distinct prime ideals in \( \mathcal{O}_K \).\\
For a rational prime $p\in\Z$, the principal ideal $(p)$ factors uniquely as $(p)=\mathfrak{p}_1^{e_1} \mathfrak{p}_2^{e_2} \cdots \mathfrak{p}_r^{e_r}$ where each $e_i$ is called the ramification index of $\mathfrak{p}_i$. We say 
\begin{itemize}
    \item[-] $p$ ramifies if, for at least one $\mathfrak{p}_i$, $e_i>1$.
    \item[-] $p$ remains inert, if $r=1$ and $e_1=1$.
    \item[-] $p$ splits if $r>1$ and $e_i=1$ for all $i$.
\end{itemize}
For a quadratic field \( K = \mathbb{Q}(\sqrt{d}) \), the behavior of primes is determined by the Legendre symbol:
a prime \( p \) ramifies \textit{iff} \( p \mid d \), splits \textit{iff} \( \left(\frac{d}{p}\right) = 1 \), and remains inert \textit{iff} \( \left(\frac{d}{p}\right) = -1 \).\\

Given a quadratic order $\mathcal{O}\subset \mathcal{O}_K$, the index $[\mathcal{O}_K:\mathcal{O}]=f$ is called the conductor and $\mathrm{disc}(\mathcal{O})=f_K^2\mathrm{disc}(K)$. We have unique factorization of $\mathcal{O}$-ideal $\mathfrak{a}$ only if $\mathfrak{a}$ and $f$ are coprime, i.e $(\mathrm{N}(\mathfrak{a}),f)=1$, where we call $\mathfrak{a}$ an invertible proper ideal.\\

To measure the potential failure of element-wise unique factorization, one utilizes the concept of ideal class group \( \mathrm{Cl}(\mathcal{O}) \). Two nonzero fractional ideals \( \mathfrak{a} \) and \( \mathfrak{b} \) are considered equivalent if 
$
\mathfrak{a} = \alpha \cdot \mathfrak{b}
$ for some \( \alpha \in K^\times \). By looking at proper invertible integral $\mathcal{O}$-ideals modulo principal ideals, the set of equivalence classes forms a finite abelian group under ideal multiplication

\[
\mathrm{Cl}(\mathcal{O}) = \{ [\mathfrak{a}] \colon \mathfrak{a} \subset \mathcal{O}, \text{ a fractional ideal} \},
\]
with the identity given by the class of all principal ideals. Using the fundamental theorem of finitely generated abelian groups, we have
\[\mathrm{Cl}(\mathcal{O})\cong \bigoplus_{i=1}^n \Z/m_i\Z\]
where $m_i|m_{i+1}$. The order of \( \mathrm{Cl}(\mathcal{O}) \) is the class number \( h(\mathcal{O}) \) which is, by analtyic methods, at most $O(d^{1/2+\varepsilon})$ for arbitrary $\varepsilon>0$.\\

\textbf{Binary Quadratic forms:} A binary quadratic form is a function of the form 
\(
f(x,y) = ax^2 + bxy + cy^2,
\)
where \( a, b, c \in \mathbb{Z} \) and \( a \) and \( c \) are nonzero. We will also utilize the handy notation $(a,b,c)$ to denoted the aforementioned form. A form is primitive if \( a, b, \) and \( c \) are relatively prime. 

The discriminant of such a form is given by
\(
D = b^2 - 4ac.
\)
A form is called positive definite if the discriminant is negative. Positive definite forms are precisely those such that \( f(x,y) \) takes only positive values when \( (x,y) \neq (0,0) \).

For this paper, all quadratic forms considered will be positive definite, primitive, binary quadratic forms.

Two forms \( f \) and \( g \) are properly equivalent if there exist integers \( p, q, r, s \) such that 
\[
f(x,y) = g(px + qy, rx + sy),
\]
and the determinant condition holds, i.e.
\(
ps - qr = 1.
\)
One can consider \( g \) to be obtained from an action of a matrix
\[
\begin{pmatrix} p & q \\ r & s \end{pmatrix} \in SL_2(\mathbb{Z})
\]
on \( f \), where the discriminant is fixed. Proper equivalence defines an equivalence relation on the set of binary quadratic forms of some fixed discriminant. Every form \( f \) within a given class is equivalent to a unique reduced form \( g \), which yields a canonical representation.

Let \( \mathrm{Cl}(D) \) be the set of primitive, positive definite, binary quadratic forms of discriminant \( D \) considered up to proper equivalence. Gauss defined a group law on \( \mathrm{Cl}(D) \), under which \( \mathrm{Cl}(D) \) is isomorphic to the ideal class group \( \mathrm{Cl}(\mathcal{O}) \), where \( \mathcal{O} \) is a quadratic imaginary order of discriminant \( D \). Specifically, we have the following theorem. 
\begin{theorem}[\cite{Cox11}, Theorem 7.7]
    Let \( \mathcal{O} \) be the order of discriminant \( D \). If 
\(
\mathfrak{a} = \mathbb{Z} \alpha + \mathbb{Z} \beta
\)
is an invertible \( \mathcal{O} \)-ideal, then the map
\[
\iota: \mathfrak{a} \mapsto f(x,y) = \frac{N(x\alpha + y\beta)}{N(\mathfrak{a})}
\]
induces an isomorphism between $\mathrm{Cl}(\mathcal{O})$ and $\mathrm{Cl}(D)$. Additionally, the inverse map is given by \[
ax^2 + bxy + cy^2 ~\mapsto~\mathbb{Z} a + \mathbb{Z} \frac{-b + \sqrt{D}}{2}
\]
\end{theorem}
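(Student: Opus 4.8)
The plan is to split the claim into four pieces and handle them in order: (i) for a fixed invertible $\mathcal{O}$-ideal $\mathfrak{a}$, show that once the basis $(\alpha,\beta)$ is oriented so that $\alpha\bar\beta-\bar\alpha\beta = N(\mathfrak{a})\sqrt{D}$, the function $f(x,y) = N(x\alpha+y\beta)/N(\mathfrak{a})$ is a primitive positive definite form of discriminant $D$; (ii) check that $\iota$ is constant on ideal classes, so it descends to a map $\mathrm{Cl}(\mathcal{O}) \to \mathrm{Cl}(D)$; (iii) verify that the displayed assignment $(a,b,c) \mapsto \mathbb{Z}a + \mathbb{Z}\frac{-b+\sqrt D}{2}$ is a two-sided inverse, giving a bijection; (iv) show $\iota$ carries ideal multiplication to Gauss (Dirichlet) composition.

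For (i) I would expand $N(x\alpha+y\beta) = N(\alpha)x^2 + \mathrm{Tr}(\alpha\bar\beta)xy + N(\beta)y^2$ and use that $\mathfrak{a}$ is \emph{proper} (i.e. invertible) to conclude that $N(\mathfrak{a})$ divides all three coefficients, so $f$ has integer coefficients; primitivity of $f$ is in fact equivalent to properness of $\mathfrak{a}$. The discriminant computation rests on the identity $\mathrm{Tr}(\alpha\bar\beta)^2 - 4N(\alpha)N(\beta) = (\alpha\bar\beta - \bar\alpha\beta)^2$ together with $(\alpha\bar\beta-\bar\alpha\beta)^2 = N(\mathfrak{a})^2\,\mathrm{disc}(\mathcal{O}) = N(\mathfrak{a})^2 D$; positive definiteness is automatic since $N$ is positive on $K^\times$ and $D<0$. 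Fixing the orientation (equivalently $\mathrm{Im}(\beta/\alpha)>0$) is the one bookkeeping convention that must be carried through the whole argument, since it is what forces the relevant change-of-basis group to be $SL_2(\mathbb{Z})$ rather than $GL_2(\mathbb{Z})$, matching \emph{proper} equivalence.

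For (ii): replacing $(\alpha,\beta)$ by $(\alpha,\beta)\gamma$ with $\gamma\in SL_2(\mathbb{Z})$ performs the transpose substitution of $\gamma$ on $f$, hence changes $f$ within its proper-equivalence class; and replacing $\mathfrak{a}$ by $\lambda\mathfrak{a}$, $\lambda\in K^\times$, scales numerator and $N(\mathfrak{a})$ both by $N(\lambda)$, leaving $f$ literally unchanged. So $\iota$ is well defined on $\mathrm{Cl}(\mathcal{O})$. For (iii), given a form $(a,b,c)$ of discriminant $D$, I would check that $\mathfrak{a} := \mathbb{Z}a + \mathbb{Z}\frac{-b+\sqrt D}{2}$ is a proper $\mathcal{O}$-ideal with $N(\mathfrak{a}) = a$ — closure under $\mathcal{O} = \mathbb{Z} + \mathbb{Z}\frac{D+\sqrt D}{2}$ uses $b^2-4ac = D$ — and that the oriented basis $\bigl(a,\tfrac{-b+\sqrt D}{2}\bigr)$ feeds back through $\iota$ to exactly $(a,b,c)$; conversely, normalizing the oriented basis of an arbitrary $\mathfrak{a}$ shows $\iota(\mathfrak{a})$ returns an ideal of the form $\lambda\mathfrak{a}$, i.e. the same class. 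Thus the two maps are mutually inverse bijections.

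The main obstacle is (iv). Here I would invoke the machinery of Dirichlet composition: given two classes, first replace their representatives by \emph{concordant} forms $(a_1, B, a_2 C)$ and $(a_2, B, a_1 C)$ with $\gcd(a_1,a_2)=1$ and a common middle coefficient $B$ satisfying $B^2 \equiv D \pmod{4a_1a_2}$ — always possible because a primitive form represents integers coprime to any prescribed modulus, after which the middle coefficients are aligned by the substitutions $b\mapsto b+2ak$ and the Chinese Remainder Theorem. On the ideal side these correspond to $\mathfrak{a}_i = \mathbb{Z}a_i + \mathbb{Z}\tau$ with $\tau = \frac{-B+\sqrt D}{2}$, and since $\tau^2 = -B\tau - a_1a_2 C \in \mathbb{Z}a_1a_2 + \mathbb{Z}\tau$ and $\gcd(a_1,a_2)=1$, a short computation gives $\mathfrak{a}_1\mathfrak{a}_2 = \mathbb{Z}a_1a_2 + \mathbb{Z}\tau$, whose associated form under $\iota$ is precisely the Dirichlet composite $(a_1a_2, B, C)$. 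Since the identity class maps to the principal form, this shows $\iota$ is a group isomorphism. Essentially all of the real work is in this last step — arranging concordant representatives and checking the ideal-product identity — whereas (i)–(iii) are formal once the orientation convention is pinned down.
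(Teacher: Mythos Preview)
The paper does not give its own proof of this theorem at all; it is stated as background and attributed to \cite{Cox11}, Theorem~7.7, without argument. So there is nothing in the paper to compare your proposal against beyond the citation itself.

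That said, your sketch is the standard proof one finds in Cox: the four-step breakdown into (i) checking that $N(x\alpha+y\beta)/N(\mathfrak{a})$ is a primitive form of discriminant $D$ via the identity $(\alpha\bar\beta-\bar\alpha\beta)^2 = N(\mathfrak{a})^2 D$, (ii) well-definedness on classes through the $SL_2(\mathbb{Z})$-action and scaling by $\lambda\in K^\times$, (iii) the explicit inverse $\mathbb{Z}a+\mathbb{Z}\frac{-b+\sqrt D}{2}$, and (iv) compatibility with Dirichlet composition via concordant forms with $\gcd(a_1,a_2)=1$ sharing a middle coefficient $B$ --- this is precisely the architecture of Cox's argument. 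Your handling of the orientation convention (fixing the sign of $\alpha\bar\beta-\bar\alpha\beta$ so that basis changes stay in $SL_2$ rather than $GL_2$) is the key bookkeeping point, and you flag it correctly. The one place where a reader might want more detail is the claim in (i) that $N(\mathfrak{a})$ divides all three coefficients and that primitivity is equivalent to properness; in Cox this is a separate lemma (Lemma~7.5 and the surrounding discussion), so in a full write-up you would need to supply that rather than assert it. Otherwise the sketch is correct and complete in outline.
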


Since this will come up in our applications, its worthy to note that the group law on the set of quadratic forms is defined by Dirichlet composition and it can be computed efficiently, along with the unique reduced form (see \cite{Cohen}, Algorithms 5.4.2 \& 5.4.7).
\section{\textbf{Cornacchia's algorithm}}\label{section3}
Cornacchia's algorithm \cite{Cor08} is a well-known efficient algorithm for solving the Diophantine equation $x^2+dy^2=m$ where $(d,m)=1$ are square free. Cornacchia originally addressed the problem of solving the equation where $m$ is a prime, but his method actually generalizes to composite $m$ via the Chinese remainder theorem, given the factorization of $m$.

The method proceeds by computing square roots $r_i^2\equiv-d \pmod m$ where WLOG $r_i<m/2$, if no such roots exist, then the equation is not solvable over the integers. To find a solution, we use the Euclidean algorithm by computing \( r_1 \equiv m \pmod{r_0} \), followed by \( r_2 \equiv r_0 \pmod{r_1} \), and continuing this process iteratively until we reach a remainder \( r_k \) such that \( r_k < \sqrt{m} \). At this point, we check whether 
\[
s = \sqrt{\frac{m - r_k^2}{d}}
\]
is an integer. If it is, then the solution is given by \( x = r_k \) and \( y = s \). Otherwise, we attempt another root of \(-d\) until either a solution is found or all possible roots have been exhausted. If no solution is found, then there is no primitive solution.\\

The algorithm is certainly efficient, with a time complexity of \( O(\log^2 m) \), provided that \( m \) is not highly composite. However, when \( m \) has many distinct prime factors, we have \( \omega(m) = O(\log m / \log\log m) \), meaning that iterating over all possible square roots until a solution is found—or worse, determining that no solution exists—causes the algorithm's running time to reach a worst-case complexity of  
\(
O(2^{\omega(m)}\log m)
\).

Another subtlety arises in generalizing the method for representing \( m \) by an arbitrary quadratic form \( f(x,y) = ax^2 + bxy + cy^2 \) with discriminant \( \mathrm{disc}(f) = -D < 0 \). The method proceeds as follows: multiplying both sides of the equation \( ax^2 + bxy + cy^2 = m \) by \( 4a \) and completing the square; we obtain  

\[
(2ax + by)^2 + D y^2 = 4am.
\]

Defining \( X = 2ax + by \) and \( Y = y \), we apply Cornacchia’s algorithm to find integer solutions \( X, Y \) satisfying  

\begin{equation}\label{completesquare}
    X^2 + D Y^2 = 4am,
\end{equation}

where \( X = 2ax + bY \) and \( y = Y \). However, finding a solution to \( X^2 + D Y^2 = 4am \) is not sufficient; one might iterate over all possible solutions to determine a valid solution for the original equation.  

Another challenge arises when \( a \) is highly composite. In such cases, the algorithm again exhibits exponential time complexity, even in the case where $\omega(m)=1$, i.e. $m$ is a prime.
\section{\textbf{The Subset-Sum problem}}\label{section4}
The subset sum problem is a well-known problem in theoretical computer science which is defined, in the general case, as follows
\begin{quote}
    \textit{Given a set $X=\{x_1,x_2,\cdots,,x_n\}$ and a target sum $T$, decide whether there exists a subset of $X$ that sums to $T$. Equivalently, does there exist $\delta_i\in\{0,1\}$ such that $\sum_{i=1}^n \delta_ix_i=T$.} 
\end{quote}
This problem is known to be {NP-complete}, and the best generic algorithm, due to Schroeppel and Shamir \cite{SS81}, solves it in \( O(2^{n/2} n/2) \)-time and \( O(2^{n/4}) \)-space. Other algorithms exist, but they are probabilistic and can only determine whether a solution exists; they are not efficient for proving that no solution exists, see for instance \cite{HJ10} and \cite{BBS20}. However, the problem is, in fact, weakly NP-complete.
This is due to Bellman \cite{Bel57} who introduced a dynamic programming approach that solves the problem in \( \mathcal{O}(nT) \) time and \( \mathcal{O}(T) \) space, where \( n \) is the number of elements and \( T \) is the target sum.
By defining a state \((i, s)\) to represent the existence of a subset of \(X\) with \(i\) elements summing to \(s\), we can transition to the next states \((i+1, s)\) and \((i+1, s + x_{i+1})\), corresponding to either excluding or including the next element \(x_{i+1}\) in the sum. This forms a state-space tree, which can be explored using the Breadth-First Search (BFS) algorithm. If the state \((n, T)\) is found, we can backtrack to reconstruct the subset summing to \(T\); otherwise, we conclude that no such subset exists.\\

Another version of the subset-sum problem, asks to find $\delta_i\in\{-1,+1\}$ such that $\sum_{i=1}^n\delta_ix_i=T$ for some target $T$. The problem can be essentially reduced to the original one by defining $\delta_i=2\varepsilon_i-1$, where $\varepsilon_i \in \{0,1\}$, and proceeding as before.\\
Another variant, which we will be interested in, would be the following: given vectors $v_1,\cdots, v_n\in G=\bigoplus_{i=1}^k\Z/m_i\Z$, find $\delta_i\in \{-1,+1\}$ such that $\sum_{i=1}^n\delta_iv_i=\vec{T}\in G$. This is essentially the same problem but now we're interested in an element (vector) in this finitely generated abelian group rather than integers. We will show how to use this Dynamic programming method to this instance to efficiently solve our Diophantine equation when $d=\mathrm{polylog}(m)$.
\section{\textbf{Reduction to the Subset-Sum problem}}\label{section5}
In this section we give the algorithms to find a representation of $f(s,t)=m$ where $f$ is a given primitive integral positive definite binary quadratic form of discriminant $D=-f_K^2d$ where $d>0$ is square-free, which we will assume for the rest of the paper. Notice that we can assume $m$ to be square-free since for $m=k^2m'$ a representation for square-free $m'=f(x,y)$ implies $m=f(kx,ky)$. We now start with a classical lemma on representations by quadratic forms and ideals of a given norm.
\begin{lemma}\label{lemma4.1}
  Let \( m  \) be a positive square-free integer, and let \( D \) be the discriminant of a quadratic form \( f(x,y) = ax^2 + bxy + cy^2 \). let $\OO$ be an order of discriminant $D$ in an imaginary quadratic field, then $f(x,y)=ax^2+bxy+cy^2$ represents $m$ if and only if $m$ is the norm of some $\mathcal{O}$-ideal $\mathfrak{a}$ in the ideal class in $\mathrm{Cl}(\mathcal{O})$ corresponding to $f$. Moreover, given generators of the $\OO$-ideal $\mathfrak{a}=\Z\alpha+\Z\beta$, one can efficiently find $x,y\in\Z$ such that $f(x,y)=m$.
\end{lemma}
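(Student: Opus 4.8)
The plan is to run the entire argument through the form-ideal dictionary of Cox's Theorem~7.7 (\cite{Cox11}) recalled above, so that the two implications become a pair of mutually inverse translations and the ``moreover'' reduces to one effective linear-algebra step. A preliminary observation: since $m$ is square-free, every representation $f(x_0,y_0)=m$ automatically has $\gcd(x_0,y_0)=1$, because its square would divide $m$; so we only ever deal with \emph{proper} representations, which is exactly the setting the classical correspondence governs.

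For the ``if'' direction, together with the effectivity claim, the key point I would exploit is that $m\in\mathfrak{a}$ whenever $\mathfrak{a}$ is an $\mathcal{O}$-ideal of norm $m$: the quotient $\mathcal{O}/\mathfrak{a}$ is an abelian group of order $N(\mathfrak{a})=m$, so $m\cdot 1\equiv 0 \pmod{\mathfrak{a}}$. Writing $\mathfrak{a}=\mathbb{Z}\alpha+\mathbb{Z}\beta$, we then have $m=x_0\alpha+y_0\beta$ for some \emph{integers} $x_0,y_0$, which I would produce by expressing $\alpha,\beta,m$ in a fixed $\mathbb{Z}$-basis of $\mathcal{O}_K$ and solving the resulting $2\times 2$ integral linear system in polynomial time; the solution is integral precisely because $\{\alpha,\beta\}$ is a $\mathbb{Z}$-basis of $\mathfrak{a}\ni m$. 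By Theorem~7.7 the binary form $g(x,y):=\frac{N(x\alpha+y\beta)}{N(\mathfrak{a})}$ is primitive, positive definite, of discriminant $D$, and properly equivalent to $f$ (up to the orientation bookkeeping of the last paragraph), while by construction $g(x_0,y_0)=\frac{N(m)}{N(\mathfrak{a})}=\frac{m^{2}}{m}=m$. To come back to $f$ itself, I would run the reduction algorithm (\cite{Cohen}, Algorithm~5.4.2) on $f$ and on $g$, obtaining their common reduced form together with the two $SL_2(\mathbb{Z})$ transition matrices, and compose these matrices to carry the representation $(x_0,y_0)$ of $m$ by $g$ over to one of $m$ by $f$. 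Every step runs in polynomial time, which is what ``efficiently'' asks for.

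For the ``only if'' direction I would start from a proper representation $f(x_0,y_0)=m$, complete $(x_0,y_0)$ to a matrix in $SL_2(\mathbb{Z})$, and use it to replace $f$ by a properly equivalent form $(m,B,C)$ whose leading coefficient is the represented value $m$, so that $B^{2}-4mC=D$. The candidate ideal is then $\mathfrak{a}:=\mathbb{Z}m+\mathbb{Z}\frac{-B+\sqrt{D}}{2}$, namely the image of $(m,B,C)$ under the inverse map of Theorem~7.7; since that inverse map induces the inverse isomorphism on classes, $[\mathfrak{a}]$ is automatically the class corresponding to $f$, so all that remains are the three routine verifications: that $\mathfrak{a}$ is an $\mathcal{O}$-submodule of $\mathcal{O}$ (a short computation using $B\equiv D\pmod 2$ and $B^{2}-4mC=D$), that it is invertible because $(m,B,C)$ is primitive (primitivity is $SL_2(\mathbb{Z})$-invariant), and that $[\mathcal{O}:\mathfrak{a}]=m$ via a change-of-basis determinant. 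I would also note that feeding this $\mathfrak{a}$ into the construction of the previous paragraph returns $(x_0,y_0)$, so the two constructions are genuinely inverse to one another.

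The point that demands the most care is the orientation convention built into the form-ideal correspondence. Each ideal involved comes with a $\mathbb{Z}$-basis, and the form $\frac{N(x\alpha+y\beta)}{N(\mathfrak{a})}$ attached to it equals $\iota(\mathfrak{a})$ or its conjugate $\overline{\iota(\mathfrak{a})}$ according to the orientation of that basis, whereas the isomorphism of Theorem~7.7 is normalized for one fixed orientation. I would handle this by recalling that reversing the order of a $\mathbb{Z}$-basis of $\mathfrak{a}$ replaces the associated form by one properly equivalent to its conjugate, and that a form and its conjugate represent exactly the same integers, so the existence equivalence is unaffected; for the constructive part one simply tries both orderings of the given basis and keeps whichever of the two resulting forms reduces to the reduced form of $f$ (exactly one does, unless $f$ is ambiguous, in which case both do). Everything else, in particular the standard fact that $\mathbb{Z}m+\mathbb{Z}\frac{-B+\sqrt{D}}{2}$ really is a proper $\mathcal{O}$-ideal of norm exactly $m$, I would cite rather than reprove.
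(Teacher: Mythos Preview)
Your proposal is correct and follows essentially the same route as the paper: both simply unpack Cox's Theorem~7.7 to get the bijection between proper representations and ideals of the right norm in the corresponding class, and both observe that extracting $(x,y)$ from a given $\mathbb{Z}$-basis $\{\alpha,\beta\}$ of $\mathfrak{a}$ is a $2\times 2$ linear-algebra computation. The paper's own proof is in fact just a one-line citation of \cite{Cox11} plus the remark about linear algebra in dimension~2, so your write-up is a strictly more detailed version of the same argument.
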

\begin{proof}
    The proof of the statement and the efficiency of extracting a representation follows from the proof of theorem 7.7 in \cite{Cox11}, as the representation just requires simple linear algebra in dimension 2.
\end{proof}
Next, we give another criterion for the representation of $m$ by $f(x,y)$ which will be the key to our reduction. Note for local conditions and unique factorization of ideals in a quadratic order $\mathcal{O}$ of discriminant $D=-f_K^2d$, we will require $(f_K,m)=1$
\begin{theorem}\label{theorem4.2}
  Let \( m = p_1 \cdots p_r \) be a positive square-free integer, and let \( D = -f_K^2 d \) be the discriminant of the quadratic form \( f(x,y) = ax^2 + bxy + cy^2 \), where \( d>0 \) is square-free and \( (f_K, m) = 1 \). Let $\OO$ be an order of discriminant $D$ in an imaginary quadratic field $K$, then $f(x,y)=ax^2+bxy+cy^2$ represents $m$ if and only if each prime $p_i$ either splits or ramifies in $K$ and \[\prod_{i}[\mathfrak{p}_i]^{\delta_i}\prod_{j}[\mathfrak{p}_j]=[\mathfrak{a]}\] for some $\delta_i\in \{-1,+1\}$ where $[\mathfrak{a}]$ is the ideal class in $\mathrm{Cl}(\OO)$ corresponding to $[f]$ in $\mathrm{Cl}(D)$; each $\mathfrak{p}_k$ is a prime $\mathcal{O}$-ideal lying above $p_k$ with $\mathfrak{p}_i$ and $\mathfrak{p}_j$ split and ramified respectively.
\end{theorem}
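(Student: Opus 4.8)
The plan is to reduce the statement, via Lemma~\ref{lemma4.1}, to an enumeration of the $\OO$-ideals of norm $m$, and then to read off their classes in $\mathrm{Cl}(\OO)$. By Lemma~\ref{lemma4.1}, $f$ represents $m$ precisely when there exists an $\OO$-ideal $\mathfrak{b}$ with $\mathrm{N}(\mathfrak{b})=m$ lying in the class attached to $[f]$. Since $(\mathrm{N}(\mathfrak{b}),f_K)=(m,f_K)=1$, which in particular makes $m$ coprime to the conductor of $\OO$, any such $\mathfrak{b}$ is a proper invertible ideal and hence factors uniquely into prime $\OO$-ideals; for the same reason the primes of $\OO$ above a rational prime $p\mid m$ are in the usual norm-preserving correspondence with the primes of $\OO_K$ above $p$, so their splitting type is the one dictated by the Kronecker symbol $\left(\tfrac{D}{p}\right)$ in $K$. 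Thus it suffices to describe the set $\{[\mathfrak{b}]:\mathfrak{b}\subseteq\OO,\ \mathrm{N}(\mathfrak{b})=m\}$ and to check that the class of $f$ lies in it exactly when the displayed product condition holds.

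Next I would carry out the enumeration. If $\mathfrak{b}=\prod_k\mathfrak{q}_k$ is a product of prime ideals with $\mathrm{N}(\mathfrak{b})=m$, then taking norms gives $\prod_k\mathrm{N}(\mathfrak{q}_k)=p_1\cdots p_r$; since each $\mathrm{N}(\mathfrak{q}_k)$ is a prime power exceeding $1$ and $m$ is square-free, the $\mathfrak{q}_k$ must be pairwise distinct and, after relabelling, $\mathfrak{q}_k$ lies above $p_k$ with $\mathrm{N}(\mathfrak{q}_k)=p_k$. In particular each $p_k$ admits a degree-one prime above it, which in a quadratic field forces $p_k$ to split or ramify, since an inert prime has only a norm-$p_k^2$ ideal above it; this is the first necessary condition, and conversely if some $p_i$ is inert no ideal of norm $m$ exists. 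When every $p_i$ splits or ramifies, each split $p_i$ offers the two choices $\mathfrak{q}_i\in\{\mathfrak{p}_i,\overline{\mathfrak{p}}_i\}$ (with $\mathfrak{p}_i\overline{\mathfrak{p}}_i=(p_i)$), and each ramified $p_j$ offers the single choice $\mathfrak{q}_j=\mathfrak{p}_j$ (with $\mathfrak{p}_j^2=(p_j)$), and every $\mathfrak{b}$ of norm $m$ arises in exactly this way.

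It remains to translate the choices into $\mathrm{Cl}(\OO)$. From $\mathfrak{p}_i\overline{\mathfrak{p}}_i=(p_i)$ principal we get $[\overline{\mathfrak{p}}_i]=[\mathfrak{p}_i]^{-1}$, so a split prime contributes a factor $[\mathfrak{p}_i]^{\delta_i}$ with $\delta_i\in\{-1,+1\}$, while a ramified prime contributes $[\mathfrak{p}_j]$; hence $\{[\mathfrak{b}]:\mathrm{N}(\mathfrak{b})=m\}=\{\prod_i[\mathfrak{p}_i]^{\delta_i}\prod_j[\mathfrak{p}_j]:\delta_i\in\{-1,+1\}\}$, the products over split and ramified primes respectively. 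Combining this with the reduction from Lemma~\ref{lemma4.1} gives the claimed equivalence in both directions (for the "if" direction one builds $\mathfrak{b}$ from the prescribed $\delta_i$, which is proper invertible since $(m,f_K)=1$). I expect the only genuine subtlety to be the bookkeeping around the non-maximal order: one must invoke $(f_K,m)=1$ at each point where invertibility, unique factorization of $\mathfrak{b}$, or the $\OO$-versus-$\OO_K$ dictionary for splitting types is used. Once those are in place the argument is a short computation with the group law on $\mathrm{Cl}(\OO)$.
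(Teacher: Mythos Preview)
Your proposal is correct and follows essentially the same route as the paper: reduce via Lemma~\ref{lemma4.1} to the existence of an $\OO$-ideal of norm $m$ in the class of $[f]$, use $(m,f_K)=1$ to get unique factorization into degree-one primes above the $p_k$, and read off the classes using $[\overline{\mathfrak{p}}_i]=[\mathfrak{p}_i]^{-1}$. The only cosmetic difference is that the paper derives the split/ramified necessary condition from the congruence $s^2\equiv -d\pmod m$ obtained by completing the square, whereas you obtain it more directly from the norm obstruction for inert primes.
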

\begin{proof}
    Suppose that the equation
\(
f(x,y)=ax^2+bxy+cy^2= m
\)
is solvable over the integers. Then, by following the same procedure as for \ref{completesquare}, the fact that $(f_K,m)=1$ imply that
\(
s^2 \equiv -d \pmod{m}
\)
is solvable, and $-d$ must be a quadratic residue modulo each prime \(p_k\) dividing \(m\), including trivially $0$ when $p_k|d$. Consequently, each \(p_k\) either splits or ramifies in the quadratic field \(K\) of discriminant $-d$.

Let $(p_i)_{i=1}^r$ be the set of primes that split in $K$, and $(p_j)_{j=r+1}^s$ the primes that ramify. Combining this with our previous lemma, there exist an $\OO$-ideal $\mathfrak{a}$ in the corresponding class of $[f]$ of norm $m$, and we obtain the factorization:
\begin{equation} \label{eq1}
    \mathfrak{a}\bar{\mathfrak{a}} = (m) = \prod_{i=1}^{r} \mathfrak{p}_i \bar{\mathfrak{p}}_i \prod_{j=r+1}^s\mathfrak{p}_j^2,
\end{equation}
where \(\mathfrak{p}_k = \mathfrak{P}_k \cap \mathcal{O}\), and \(\mathfrak{P}_k\) is a prime \(\mathcal{O}_K\)-ideal with \(\mathrm{N}(\mathfrak{P}_k)=\mathrm{N}(\mathfrak{p}_k)=p_k\) (see Proposition 7.20 of \cite{Cox11}).
 Notice that $[\bar{\mathfrak{p}_i}]=[\mathfrak{p}_i]^{-1}$ since $p_i\mathcal{O}=\mathfrak{p}_i\bar{\mathfrak{p}_i}$. Additionally, since the norm satisfies \( \mathrm{N}(\mathfrak{a}) = m \) and \( \mathrm{N}(\mathfrak{p}_k) = p_k \), and given that the ring \( \mathcal{O} \) enjoys unique factorization of ideals (due to \( (f_K, m) = 1 \)), it follows from equation (\ref{eq1}) that
    \(\mathfrak{a}=\prod_{i=1}^r\mathfrak{q}_i \prod_{j=r+1}^s \mathfrak{p}_j\) where $\mathfrak{q}_i=\mathfrak{p}_i~\mathrm{or}~\bar{\mathfrak{p}_i}$. This, in return implies
    \[\prod_{i}[\mathfrak{p}_i]^{\delta_i}\prod_{j}[\mathfrak{p}_j]=[\mathfrak{a]}\]
    where $\delta_i\in \{-1,+1\}$.

    On the other hand, if each $p_i$ either splits or ramifies in $K$ with the same notation for each $(p_i)_{i=1}^r$ and $(p_j)_{j=r+1}^s$, and $\prod[\mathfrak{p}_i]^{\delta_i}\prod_{j}[\mathfrak{p}_j]=[\mathfrak{a]}$ in $\mathrm{Cl}(\mathcal{O})$, then $\mathfrak{a}=\prod_{i=1}^r \mathfrak{q}_i\prod_{j}\mathfrak{p}_j$ has norm $m$ which lies in the corresponding ideal class of $[f]$, where $\mathfrak{q}_i=\mathfrak{p}_i~\mathrm{or}~\bar{\mathfrak{p}_i}$. Hence we can find a representation efficiently using lemma \ref{lemma4.1}
\end{proof}
We see that the $\delta_i's \in \{-1,+1\}$ are precisely the coefficients as in the subset sum problem. With this equivalence in hand, we now give an algorithm to find a solution to $f(x,y)=m$ or decide that non exist, which works best for $D^{1/2+\varepsilon}\ll_\varepsilon 2^{\omega(m)}$.
\begin{theorem}
    Given positive square-free integers $m$ and $d$ and binary quadratic form $f$ of discriminant $D=-f_K^2d$, along with the factorization of $m$ such that $(f_K,m)=1$, algorithm \ref{alg1} either finds integral solution to $f(x,y)=m$ or decides that no such solution exists. The algorithm has deterministic time complexity of
    \[O\left(D^{1/2+\varepsilon}\omega(m)+\log^2m\right)\] 
    and $O\left(D^{1/2+\varepsilon}\right)$-space complexity for arbitrary $\varepsilon>0$, where $\omega(m)$ is the number of distinct prime factors of $m$.
\end{theorem}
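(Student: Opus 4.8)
The plan is to convert the equivalence in Theorem~\ref{theorem4.2} directly into Algorithm~\ref{alg1}, whose only non-trivial component is a single instance of the group $\{-1,+1\}$-subset-sum problem in $G=\mathrm{Cl}(\OO)$, solved by the Bellman-type dynamic program of Section~\ref{section4}. The point of the whole construction is that one never enumerates the $2^{\omega(m)}$ sign vectors (doing exactly that is what makes the composite Cornacchia method exponential), and that one never computes the structure of $\mathrm{Cl}(\OO)$ either: it is enough to do arithmetic on reduced binary quadratic forms of discriminant $D$ through Dirichlet composition and reduction, which costs $\mathrm{polylog}(|D|)$ bit operations by the algorithms recalled in Section~\ref{section2}.

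In more detail, the algorithm proceeds as follows. First, for each prime $p_k\mid m$ it decides — via the Legendre/Kronecker symbol at $p_k$ — whether $p_k$ splits, ramifies, or is inert in $K$; if some $p_k$ is inert it halts and returns ``no solution'', which is correct by Theorem~\ref{theorem4.2}. Otherwise it computes, for each $p_k$, an integer $b_k$ with $b_k^2\equiv D\pmod{4p_k}$ (a square root modulo $p_k$, exactly as in Cornacchia's method) and sets $v_k\in G$ to be the reduced form properly equivalent to $(p_k,b_k,(b_k^2-D)/(4p_k))$, i.e.\ $v_k=[\mathfrak p_k]$. It then splits the primes into the split ones $p_1,\dots,p_n$ and the ramified ones $p_{n+1},\dots,p_r$, lets $[\mathfrak a]$ be the reduced form representing $[f]$, and forms the target $\vec T=[\mathfrak a]\cdot\bigl(\prod_{j>n}v_j\bigr)^{-1}\in G$. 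Next it runs the dynamic program for $\delta_i\in\{-1,+1\}$ with $\prod_{i\le n}v_i^{\delta_i}=\vec T$: it maintains the reachable set $S_0=\{[\OO]\}$ and $S_i=(v_i\cdot S_{i-1})\cup(v_i^{-1}\cdot S_{i-1})$, stored in a hash table keyed by reduced forms, keeping only the current layer, and finally tests whether $\vec T\in S_n$. If it is, the algorithm recovers a valid sign vector $(\delta_i)$ by a space-efficient backtrack over the item set (a Hirschberg-style divide-and-conquer; a plain recomputation of each layer also works), combines the chosen square roots $\delta_ib_i$ and the $b_j$ by the Chinese Remainder Theorem into a single $b$ with $b^2\equiv D\pmod{4m}$, so that $\mathfrak a=\Z m+\Z\tfrac{-b+\sqrt D}{2}$ is the $\OO$-ideal $\prod_i\mathfrak p_i^{\delta_i}\prod_j\mathfrak p_j$ of Theorem~\ref{theorem4.2}, of norm $m$ and in the class of $[f]$, and then extracts $x,y$ with $f(x,y)=m$ by Lemma~\ref{lemma4.1}; if $\vec T\notin S_n$ it returns ``no solution''.

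Correctness is immediate from Theorem~\ref{theorem4.2}: the equation $f(x,y)=m$ is solvable precisely when every $p_k$ splits or ramifies and the displayed subset-sum instance is feasible; the Bellman search is exhaustive over all $2^n$ sign vectors, so it detects feasibility iff it holds, and Lemma~\ref{lemma4.1} turns the resulting ideal of norm $m$ in the class of $[f]$ into an integral representation.

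For the running time, the dynamic program has $n\le\omega(m)$ layers, each layer involves at most $|S_i|\le h(\OO)$ forms, and by the analytic class number bound recalled in Section~\ref{section2} one has $h(\OO)=O(D^{1/2+\varepsilon})$; each transition is one composition plus one reduction and each hash operation costs $\mathrm{polylog}(D)$, so the search runs in $O\!\bigl(\omega(m)\,h(\OO)\,\mathrm{polylog}(D)\bigr)=O\!\bigl(D^{1/2+\varepsilon}\omega(m)\bigr)$ time and, since only one layer is kept, $O(D^{1/2+\varepsilon})$ space. The remaining work — the $\omega(m)$ Legendre/Kronecker symbols, the square roots modulo the $p_k$, the reductions of the forms $(p_k,b_k,\cdot)$, the final CRT step and the dimension-$2$ linear algebra of Lemma~\ref{lemma4.1} — takes $O(\log^2 m)$ (using $\sum_k\log^2 p_k\le\log^2 m$), giving the stated $O(D^{1/2+\varepsilon}\omega(m)+\log^2 m)$ time. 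The main obstacle I expect is the space bound: the factor $\omega(m)=O(\log m/\log\log m)$ is \emph{not} absorbed by $D^{1/2+\varepsilon}$ (since $m$ and $D$ are unrelated), so one cannot simply store all $n$ reachable layers for the reconstruction step; this is what forces the Hirschberg-style divide-and-conquer backtracking, which stays within $O(h(\OO))$ space at the cost of only a $\mathrm{polylog}$ factor in time. A secondary point needing care is verifying that Dirichlet composition, form reduction, and hash lookups genuinely run in $\mathrm{polylog}(D)$ so that they vanish into the $D^{\varepsilon}$, and — exactly as for Cornacchia's algorithm — that the square roots modulo the $p_k$ are computed within the claimed bound.
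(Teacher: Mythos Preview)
Your proposal is correct and follows the same overall reduction to a $\{-1,+1\}$-subset-sum instance in $\mathrm{Cl}(\OO)$ solved by the Bellman dynamic program, but it deviates from the paper's Algorithm~\ref{alg1} in one substantive way: the paper \emph{does} compute the group structure $\mathrm{Cl}(\OO)\cong\bigoplus_k\Z/m_k\Z$ (via Shanks, Step~8) and then solves a discrete logarithm for each $[\mathfrak p_i]$ to obtain its coordinate vector $v_i\in G$ (Step~13), after which the DP is run on integer vectors modulo the $m_k$. You bypass both of these steps entirely by letting the ``group elements'' be the reduced forms themselves, using Dirichlet composition and Gaussian reduction as the group law and a hash on reduced triples $(a,b,c)$ for membership tests. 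This is a genuine simplification: it removes the Shanks class-group computation and the $\omega(m)$ discrete-log subroutines from the algorithm (both of which the paper has to account for, even if they are dominated by the main term), at the mild cost of each DP transition being a form composition rather than a vector addition. Your CRT reconstruction of $b$ with $b^2\equiv D\pmod{4m}$ to produce the ideal $\Z m+\Z\tfrac{-b+\sqrt D}{2}$ is also a clean alternative to the paper's iterative product of two-generator ideals in Step~31.

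You are also more careful than the paper on the space bound for the reconstruction step. The paper asserts that keeping only the current and previous layers suffices for $O(h(D))$ space while still allowing one to ``backtrack'' for the $\delta_i$; as you correctly note, naive backtracking requires all $\omega(m)$ layers, and your Hirschberg-style divide-and-conquer (or layer recomputation) is exactly what is needed to stay within $O(D^{1/2+\varepsilon})$ space without the extra $\omega(m)$ factor.
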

\begin{proof}
    \textit{Steps 1-5.} These steps check if each prime $p_i\mid m$ splits in $\Q(\sqrt{-d})$, equivalently that the Legendre symbol is $+1$, which takes a total time $\mathcal{O}(\log^2m)$ (ramified primes are easy to check by division).\\
    
    \textit{Step 8.} This step involves computing the class group of the order $\mathcal{O}$ using Shanks algorithm \cite{Sha69}. The algorithm runs unconditionally in time $\mathcal{O}(d^{1/4+\varepsilon})$ for arbitrary $\varepsilon>0$. Even though there exist subexponential algorithms in $\log d$ under GRH, like \cite{HM89} an \cite{Jac99}, for our purposes this will suffice.\\

    \textit{Step 9.} Given the factorization of $m$ we can compute $r_i^2\equiv -d\pmod {p_i}$ for each $i$ using Tonelli-Shanks algorithm in time $\mathcal{O}(\log^2(p_i))$, and hence square roots computation takes an overall $\mathcal{O}(\log^2m)$ time.\\

    \textit{Step 13.} This involves writing each class $[\mathfrak{p}_i]$ in terms of the generators of $\mathrm{Cl}$, this can be done by solving discrete logs in the group. Each class representation can be done in subexponential time.\\
    
     \textit{Steps 23-24.} A state $(i,\Vec{s})$ is a valid state if there exist exists $\delta_1,\cdots,\delta_i$ such that \[\sum_{j=1}^i \delta_i v_i=\Vec{s}\in \prod_{k=1}^r \Z/m_k\Z.\]. Each state $(i,\Vec{s})$ has two next states:
    \begin{itemize}
        \item $(i+1,\Vec{s}+v_{i+1})$ where we choose $\delta_{i+1}=1$ and form the sum $s+v_i$
        \item $(i+1\Vec{s}-v_{i+1})$ where $\delta_{i+1}=-1$ and we form the sum $s-v_{i+1}$,
    \end{itemize}
    and we initialize the state $(0,\Vec{0})$. Since \(\prod_{i}[\mathfrak{p}_i]^{\delta_i}\prod_{j}[\mathfrak{p}_j]=[\mathfrak{a]}\), we see that in vector notation we need $\sum_{i=1}^r \delta_iv_i=a-\sum_{j=r+1}^sv_j=T$.
\FloatBarrier 
\begin{algorithm}[H]
    \caption{N{\footnotesize ORM}R{\footnotesize EPRESENT}($m$,$d$)}\label{alg1}
    \begin{algorithmic}[1]
        \Statex \textbf{Input:} Positive square-free integers $m$ and $d$ along with the prime factorization $m=p_1\cdots  p_s$ and binary quadratic form $f$ of discriminant $D=-f_K^2d$.
        \Statex \textbf{Output:} A vector $(x,y)\in \Z^2$ such that $f(x,y)=m$ or $\perp$.
        \For{$i=1$ to $s$}
        \If{$\left(\frac{-d}{p_i}\right)= -1$}
        \State \Return $\perp$;
        \EndIf
        \EndFor
        \State $K\leftarrow$ Imaginary quadratic field of discriminant $d$;
        \State $\theta, \mathcal{O} \leftarrow$ generator of $\mathcal{O}_K$ and $\mathcal{O}=\Z[f_K\theta]$;
        \State $G=\bigoplus_{k=1}^t \Z/m_k \Z \leftarrow$ group structure $\mathrm{Cl}(\mathcal{O})\cong \bigoplus \Z/m_k \Z $
        \cite{Sha69}
        \For{$p_i$ split in $K$ ($i=1,\cdots r$)}
        \State $r_i\leftarrow$  $r_i^2\equiv -d \pmod {p_i}$ using Tonelli-Shanks algorithm;
        \State  $\mathfrak{P}_i\leftarrow (p_i, r_i+\theta)$;
        \State $\mathfrak{p}_i\leftarrow\mathfrak{P}_i\cap \mathcal{O}$
        \State $v_i \leftarrow$ the representation of $[\mathfrak{p}_i]$ in $G$;
        \EndFor
        \For {$p_j$ ramified in $K$ $(j=r+1,\cdots s$)}
        \State $\mathfrak{P}_j \leftarrow$ prime ideal lying above $p_j$
        \State $\mathfrak{p}_j\leftarrow \mathfrak{P}_j\cap \OO$
        \State $v_j \leftarrow$ the representation of $[\mathfrak{p}_j]$ in $G$;
        \EndFor
        \State $\vec{a} \leftarrow$ representation of ideal class $[a] \in G$ corresponding to $[f]$ 
        \State $T\leftarrow$ $\vec{a}-\sum_{j=r+1}^sv_j$
        \State $V\leftarrow \{v_1,\cdots,v_r\}$;
        \State $(i,\Vec{s})\leftarrow\{\exists~\delta_1,\cdots \delta_i \in \{-1,+1\} \mid \sum_{k=1}^i\delta_k v_k=\Vec{s}\}$;
        \State $(0,\Vec{0})\leftarrow$ initialization state;
        \State Search for the state $(r,T)$ in the state-space tree; \cite{Bel57}
        \If {$(r, T)$ is found}
        \State Backtrack to find $\delta_i$ for $1 \leq i \leq r$
        \Else
           \State \Return $\perp$
        \EndIf
        \State $\mathfrak{a} \leftarrow \prod_{i=1}^{r} \mathfrak{q}_i \prod_{j=r+1}^s\mathfrak{p}_j$ where \(
            \mathfrak{q}_i = 
            \begin{cases}
            \mathfrak{p}_i, & \text{if } \delta_i = +1;\\[1mm]
            \overline{\mathfrak{p}}_i, & \text{if } \delta_i = -1;
            \end{cases}
            \)
        \State $(x,y)\leftarrow$ a representation of $m$ by $f$; $\{$Lemma \ref{lemma4.1}$\}$
        \State \Return $(x,y)$
    \end{algorithmic}
\end{algorithm}
    \textit{Steps 25-30.} Given this state-space tree, we can use the Breadth first Search algorithm (BFS), for instance, to search the state $(r,T)$ as in Bellman's method \cite{Bel57}. If the state is found we can back track and find the sequence of $(\delta_i)_{i=1}^r$ such that $\sum\delta_iv_i=T$. Otherwise, return that no such sequence exists, which implies that \(\prod_{i}[\mathfrak{p}_i]^{\delta_i}\prod_{j}[\mathfrak{p}_j]=[\mathfrak{a]}\) for all combinations of $\delta_i's$ and hence $f(x,y)=m$ is not solvable over $\Z$ by theorem \ref{theorem4.2}. As discussed in section \ref{section4}, this dynamic programming method using (BFS) has time complexity which is at most linear in the number of states. The number of states is $r=\omega(m)$ times the number of sums $\vec{s}$ we obtain, which is bounded above by $|\mathrm{Cl}(\OO)|=h(D)=\mathcal{O}(D^{1/2+\varepsilon})$. Hence the total time complexity for these steps is $\mathcal{O}(d^{1/2+\varepsilon}\omega(m))$. Since space-optimized dynamic programming does not require storing all previous rows, but only maintains an array of distinct sums while keeping track of the current and previous state arrays, the space complexity is reduced to \(\mathcal{O}(h(D)) = \mathcal{O}(D^{1/2+\varepsilon})\).\\

    \textit{Steps 31,32.} After figuring the right combination of $(\delta_i)_{i=1}^{r}$ we have \(\prod_{i}\mathfrak{p}_i^{\delta_i}\prod_{j}\mathfrak{p}_j=\mathfrak{a}\) where $\mathfrak{a}$ lies in the corresponding class of $[f]$ and this step just involves writing the ideal in terms of two generators which can be done iteratively. To extract a representation $f(x,y)=m$, we use lemma \ref{lemma4.1}.
\end{proof}
\textit{Remark. } Since the algorithm's running time is dominated by the complexity of finding the correct state in the tree which is $O(d^{1/2+\varepsilon}\omega(m))$, algorithm \ref{alg1} remains more efficient than Cornacchia's algorithm as long as $D^{1/2+\varepsilon}\ll2^{\omega(m)}$ for arbitrary $\varepsilon>0$.\\

We now describe an algorithm that exploits theorem \ref{theorem4.2} and the meet-in-the-middle approach to achieve a quadratic speedup over Cornacchia's algorithm in the generic case.
\begin{theorem}
   Given positive square-free integers $m$ and $d$ and binary quadaratic form $f$ of discriminant $D=-f_K^2d$, along with the factorization of $m$ such that $(f_K,m)=1$, algorithm \ref{alg2} either finds integral solution to $f(x,y)=m$ or decides that no such solution exists. The algorithm has a deterministic worst-case time \[O\left(2^{\omega(m)/2}\left({\omega(m)}\log D+\log m\right)+\log^2m\right),\] and $O(2^{\omega(m)/2})$-space complexity.
\end{theorem}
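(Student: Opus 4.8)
The plan is to prove this in close parallel with the proof of the previous theorem, replacing the dynamic-programming (Bellman) search over the group $G = \mathrm{Cl}(\mathcal O)$ by a meet-in-the-middle search on the exponent vector $(\delta_i)_{i=1}^r \in \{-1,+1\}^r$, where $r = \omega(m)$ (up to the ramified primes, which contribute a fixed shift). By Theorem \ref{theorem4.2}, after the preliminary steps (checking each $p_i$ splits or ramifies, computing the class group $\mathrm{Cl}(\mathcal O)$, computing square roots $r_i^2 \equiv -d \pmod{p_i}$, forming the prime ideals $\mathfrak p_i$, and taking discrete logs to get the vectors $v_i \in G$ and the target $T = \vec a - \sum_{j>r} v_j$), the equation $f(x,y)=m$ is solvable iff there exist $\delta_i \in \{-1,+1\}$ with $\sum_{i=1}^r \delta_i v_i = T$ in $G$. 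Writing $r = r_1 + r_2$ with $r_1 = \lfloor r/2 \rfloor$, this is equivalent to the existence of a pair $\bigl((\delta_i)_{i\le r_1}, (\delta_i)_{i>r_1}\bigr)$ with $\sum_{i\le r_1}\delta_i v_i = T - \sum_{i>r_1}\delta_i v_i$.

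The algorithmic steps I would carry out, and cost, are: (1) enumerate the left list $L = \{\sum_{i\le r_1}\delta_i v_i : \delta \in \{-1,+1\}^{r_1}\}$, each element a vector in $G = \bigoplus_{k=1}^t \Z/m_k\Z$; there are $2^{r_1} = O(2^{\omega(m)/2})$ of them, each built incrementally by one group addition costing $O(\omega(m)\log D)$ bit operations (a group element is stored as $t \le O(\log D)$ residues each of size $O(\log D)$, and $t$ is at most $O(\log h(D)) = O(\log D)$; adding $v_i$ is $O(\log^2 D)$, dominated by the stated bound after summing over the recursion — I will absorb constants), so building $L$ costs $O(2^{\omega(m)/2}\,\omega(m)\log D)$; (2) sort $L$ (e.g. lexicographically on the residue tuple) in $O(2^{\omega(m)/2}\cdot \omega(m)/2 \cdot \log D)$ comparisons, again within budget; (3) enumerate the right list: for each $\delta' \in \{-1,+1\}^{r_2}$ form $T - \sum_{i>r_1}\delta_i v_i$ and binary-search for it in $L$; each of the $O(2^{\omega(m)/2})$ queries costs $O(\omega(m)\log D)$ for the partial sum plus $O(\omega(m)(\log D)^2/\text{word})$ — i.e. $O(\omega(m)\log D)$ in the model used — for the search; (4) on a match, concatenate the two exponent halves to recover $(\delta_i)_{i=1}^r$, form $\mathfrak a = \prod \mathfrak q_i \prod_{j>r}\mathfrak p_j$ with $\mathfrak q_i = \mathfrak p_i$ or $\bar{\mathfrak p}_i$ according to $\delta_i$, and extract $(x,y)$ by Lemma \ref{lemma4.1} in $O(\log^2 m)$; if no match occurs, return $\perp$ — correctness of this "no solution" conclusion is exactly the contrapositive direction of Theorem \ref{theorem4.2}. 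Summing, the search dominates and gives $O\bigl(2^{\omega(m)/2}(\omega(m)\log D + \log m)\bigr)$, with the additive $\log^2 m$ from the Legendre-symbol / square-root / representation steps, and the class-group computation subsumed (it is $O(d^{1/4+\varepsilon})$, which I will note is lower order or at worst comparable and can be folded into the $\log^2 m$-type overhead in the regime of interest; alternatively one states the bound with that term explicitly — I would keep the statement as given and remark that Step 8's cost is not the bottleneck). Space is $O(2^{\omega(m)/2})$ for storing $L$, each entry $O(\log D)$ bits, which the stated $O(2^{\omega(m)/2})$ bound absorbs into the machine-word count.

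The main obstacle — or rather the main point needing care — is the bookkeeping of the bit-complexity of group arithmetic so that it genuinely collapses to the claimed $\omega(m)\log D$ factor rather than something like $\omega(m)\log^2 D$: one must argue that representing and adding elements of $\mathrm{Cl}(\mathcal O)$, after the one-time discrete-log setup, is cheap, using that $h(D) = O(D^{1/2+\varepsilon})$ so $\log h(D) = O(\log D)$ and the number of invariant factors $t$ is $O(\log D)$, and that incremental construction of each list element costs only one group operation beyond its predecessor. A secondary subtlety is that the "$-1/+1$" meet-in-the-middle must be phrased so that matching a left partial sum against $T$ minus a right partial sum correctly recovers a global $\{-1,+1\}$-vector (no sign bookkeeping error), and that backtracking through the two halves is $O(\omega(m))$. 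I would also remark, as in the Remark after the previous theorem, that this gives the promised quadratic speedup over Cornacchia's worst-case $O(2^{\omega(m)}\log m)$, at the cost of $O(2^{\omega(m)/2})$ space, with a further time/space trade-off available via Schroeppel–Shamir (Theorem \ref{alg3}) if one wants $O(2^{\omega(m)/4})$ space.
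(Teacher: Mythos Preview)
Your meet-in-the-middle skeleton is correct and matches the paper's intent: split the $r$ split-prime contributions into two halves, enumerate the $2^{r/2}$ partial products on each side, sort one list, and search it from the other side; on a match concatenate the two sign-vectors and extract $(x,y)$ via Lemma~\ref{lemma4.1}. The correctness argument via Theorem~\ref{theorem4.2} is exactly right.

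The gap is in \emph{how} you represent and manipulate class-group elements. You inherit Step~8 and Step~13 from Algorithm~\ref{alg1}: compute the invariant-factor decomposition $\mathrm{Cl}(\mathcal O)\cong\bigoplus_k\Z/m_k\Z$ via Shanks and then take discrete logs so that each $[\mathfrak p_i]$ becomes a coordinate vector $v_i$. These two preprocessing steps cost $O(D^{1/4+\varepsilon})$ and subexponential time in $\log D$, respectively, and neither is dominated by $O\bigl(2^{\omega(m)/2}(\omega(m)\log D+\log m)+\log^2 m\bigr)$ for generic $D$---think of the extreme case $\omega(m)=1$ with $D$ large. Your remark that this ``can be folded into the $\log^2 m$-type overhead'' is not justified, and the theorem is precisely meant to cover the regime where $D$ is \emph{not} polylogarithmic in $m$ (otherwise Algorithm~\ref{alg1} already suffices).

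The paper sidesteps this entirely: Algorithm~\ref{alg2} never computes the group structure and never takes a discrete log. Instead it works directly in $\mathrm{Cl}(D)$ via the isomorphism $\iota$, representing each class by its unique \emph{reduced} form $(a,b,c)$. Dirichlet composition and Gaussian reduction each cost $O(\log D)$, and equality of classes is tested by comparing reduced triples lexicographically. Thus the two lists $L_1,L_2$ consist of reduced forms, built by successive compositions, sorted lexicographically, and matched by checking whether $[H]\cdot[F]^{-1}$ (a single composition-plus-reduction) lies in $L_1$. All the group arithmetic you need is already available at cost $O(\log D)$ per operation, with no global structural precomputation. If you rewrite your steps (1)--(3) with ``reduced form under composition'' in place of ``coordinate vector under addition,'' the stated complexity follows cleanly.
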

\begin{proof}
    Most of the steps are similar to the previous algorithm, we will just comment on the procedure and the running time of some steps.\\

    \textit{Steps 23-28} After forming the set $S$ of reduced binary form classes $\iota([\mathfrak{p}_i])$, we split the set into two equally large sets $S_1$ and $S_2$ of cardinalates $r_j\approx r/2$. In each set $S_j$ we form all products \[\prod_{i=1}^{r_j}\iota([\mathfrak{p}_i])^{\delta_i}\] corresponding each combination of $\delta_i\in\{-1,+1\}$, reducing each one at a time obtain a unique representative $(a,b,c)$. The product of forms, i.e. composition, and the reduction can be done efficiently in $O(\log D)$, and the total time complexity is $O\left(2^{\omega(m)/2}\cdot\omega(m)\log D\right)$. To sort a set of $n$ elements by lexicographic order, this can be in $O(n\log n)$ time, i.e. for our sets $S_i$ in $O(2^{\omega(m)/2}\omega(m)/2)$ time. \\

    \textit{Steps 30-35} For each form class $[F]=[(a,b,c)]$, we check if $[f]\cdot[G]^{-1}\cdot [F]^{-1}=[H][F]^{-1}=[(a',b',c')]\in L_1$, which can be done in $O(\omega(m)/2)$ time in a sorted list. If we exhaust all classes in $L_2$ and $[H][F]^{-1}\not\in L_1$, then there exists no solution to the Diophantine equation by theorem \ref{theorem4.2}. Otherwise we have 
    \[[f]=[G](\left[H][F]^{-1}\right)[F]=\prod_{j=r+1}^s\iota([\mathfrak{p}_j])\prod_{i=1}^{r_1}\iota([\mathfrak{p}_i])^{\delta_{1,i}}\prod_{i=1}^{r_2}\iota([\mathfrak{p}_i])^{\delta_{2,i}}=\iota\left(\prod_{j=r+1}^s\mathfrak{p}_j\prod_{i=1}^r[\mathfrak{p}_i]^{\delta_i}\right),\]
    where $\delta_1=\delta_{[H][F]^{-1}}$ and $\delta_2=\delta_{[F]}$. Hence, $\delta=(\delta_1,\cdots,\delta_r)$ is the desired combination.\\

    The rest of the steps follow exactly as in algorithm \ref{alg1}.
\end{proof}
\begin{algorithm}[H]
    \caption{G{\footnotesize ENERIC}N{\footnotesize ORM}R{\footnotesize REPRESENT}($m$, $d$)}\label{alg2}
    \begin{algorithmic}[1]
        \Statex \textbf{Input:} Positive square-free integers $m$ and $d$ along with the prime factorization $m=p_1\cdots  p_s$ and binary quadratic form $f$ of discriminant $D=-f_K^2d$.
        \Statex \textbf{Output:} A vector $(x,y)\in \Z^2$ such that $f(x,y)=m$ or $\perp$.
        \For{$i=1$ \textbf{to} $s$}
            \If{$\left(\frac{-d}{p_i}\right) =- 1$}
                \State \Return $\perp$
            \EndIf
        \EndFor
        \State $K\leftarrow$ Imaginary quadratic field of discriminant $d$;
        \State $\theta, \mathcal{O} \leftarrow$ generator of $\mathcal{O}_K$ and $\mathcal{O}=\Z[f_K\theta]$;
        \State $\iota \leftarrow$ the isomorphism $\iota:\mathrm{Cl}(\mathcal{O}) \rightarrow \mathrm{Cl}(-d)$
        \For{$p_i$ split in $K$ ($i=1,\cdots r$)}
        \State $r_i\leftarrow$  $r_i^2\equiv -d \pmod {p_i}$ using Tonelli-Shanks algorithm;
        \State  $\mathfrak{P}_i\leftarrow (p_i, r_i+\theta)$;
        \State $\mathfrak{p}_i\leftarrow\mathfrak{P}_i\cap \mathcal{O}$
            \State Reduce the form class $\iota([\mathfrak{p}_i])$ via Gaussian reduction;
        \EndFor
        \For {$p_j$ ramified in $K$ $(j=r+1,\cdots s$)}
        \State $\mathfrak{P}_j \leftarrow$ prime ideal lying above $p_j$
        \State $\mathfrak{p}_j\leftarrow \mathfrak{P}_j\cap \OO$
         \State Reduce the form class $\iota([\mathfrak{p}_j])$ via Gaussian reduction;
        \EndFor
        \State $[G]\leftarrow$ Reduced product $\prod_{j=r+1}^s\iota([\mathfrak{p}_j])$
        \State $[H] \leftarrow$ Reduced Dirichlet composition of classes $[f]\cdot[G]^{-1}$;
        \State $S \leftarrow \left\{ \iota([\mathfrak{p}_1]), \ldots, \iota([\mathfrak{p}_r])\right \}$
        \State Split $S$ into $S_1$ and $S_2$, each of cardinality $\lfloor r/2 \rfloor$;
        \For{$i=1,2$}
            \State Generate all $2^{r/2}$ class products $[F]$ of $S_i$, each with associated sign vector $\delta_{[F]}$;
            \State $(a,b,c) \leftarrow$ the unique reduced form in the class of $[F]$;
            \State Sort the forms in increasing lexicographic order (by $a$, then $b$, then $c$);
            \State Store the pair $\bigl( [(a,b,c)],\, \delta_{[F]} \bigr)$ in the list $L_i$;
        \EndFor
        \For{$[F] \in L_2$}
            \If{$[H]\cdot[F]^{-1} \in L_1$}
                \State Extract $\delta_{[H]\cdot[F]^{-1}}$ and $\delta_{[F]}$ and go to step \ref{step29};
            \EndIf
        \EndFor
        \State \Return $\perp$
        \State $\delta \leftarrow$ the concatenation $\delta_{[H][F]^{-1}} \,||\, \delta_{[F]}$\label{step29};
        \State $I \leftarrow \prod_{i=1}^{r} \mathfrak{q}_i \prod_{j=r+1}^s\mathfrak{p}_j$, where
            \(
            \mathfrak{q}_i = 
            \begin{cases}
            \mathfrak{p}_i, & \text{if } \delta_i = +1;\\[1mm]
            \overline{\mathfrak{p}}_i, & \text{if } \delta_i = -1;
            \end{cases}
            \)
        \State $(x,y)\leftarrow$ a representation of $m$ by $f$; $\{$Lemma \ref{lemma4.1}$\}$;
        \State \Return $(x,y)$
    \end{algorithmic}
\end{algorithm}
We now present a similar algorithm with improved space complexity that is generally more practical.
\begin{theorem}\label{alg3}
   Given positive square-free integers $m$ and $d$ and binary quadaratic form $f$ of discriminant $D=-f_K^2d$, along with the factorization of $m$ such that $(f_K,m)=1$, algorithm \ref{alg2} can be modified to find integral solution to $f(x,y)=m$ or decide that non exist in deterministic worst-case time \[O\left(2^{\omega(m)/2}\omega(m)\log D+\log^2m\right),\] and $O(2^{\omega(m)/4})$-space complexity.
\end{theorem}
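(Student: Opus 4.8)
\textbf{Proof proposal for Theorem \ref{alg3}.}

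The plan is to keep Algorithm \ref{alg2} intact up to and including the formation of the reduced split-prime classes $\iota([\mathfrak p_i])$, the target class $[H]=[f]\cdot[G]^{-1}$, and the final recovery via Lemma \ref{lemma4.1}, and to replace only the two-way meet-in-the-middle of steps 23--35 by the four-way split of Schroeppel and Shamir \cite{SS81}. By Theorem \ref{theorem4.2}, solving $f(x,y)=m$ is equivalent to finding $\delta\in\{-1,+1\}^r$ with $\prod_{i=1}^r[\mathfrak p_i]^{\delta_i}=[H]$, where $r\le\omega(m)$ is the number of split primes. First I would partition the split primes into four blocks $B_1,B_2,B_3,B_4$ of size $\lceil r/4\rceil$ (padding with the identity class), and for each block enumerate the $2^{r/4}$ signed products $[F]=\prod_{i\in B_k}\iota([\mathfrak p_i])^{\delta_i}$, storing each reduced form $(a,b,c)$ together with its sign vector and sorting the resulting list $A_k$ in lexicographic order. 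This costs $O(2^{r/4}\,\omega(m)\log D)$ time and, crucially, only $O(2^{r/4})$ space for the four lists.

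Next I would run the Schroeppel--Shamir collision search for $[F_1][F_2]=[H]\cdot([F_3][F_4])^{-1}$ over $(A_1,A_2,A_3,A_4)$. Using one min-heap with a node per element of $A_1$ pointing into the sorted $A_2$, one enumerates the composites $\{[F_1][F_2]\}$ in increasing order using space $O(|A_1|)=O(2^{r/4})$; symmetrically a second heap enumerates $\{[H]\cdot([F_3][F_4])^{-1}\}$ in increasing order, and a two-pointer merge of the two streams detects a coincidence. Each of the $O(2^{r/2})$ steps performs a heap update of $O(\log 2^{r/4})=O(\omega(m))$ comparisons plus one composition and reduction, and each comparison or group operation costs $O(\log D)$ since reduced forms have $O(\log D)$-bit entries; this yields total time $O(2^{\omega(m)/2}\omega(m)\log D)$, to which we add $O(\log^2 m)$ for the Legendre symbols and the Tonelli--Shanks computations, while the space stays $O(2^{\omega(m)/4})$. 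When a coincidence is found, concatenating the four sign sub-vectors gives the desired $\delta$; if both streams are exhausted with no coincidence, no such $\delta$ exists and hence, by Theorem \ref{theorem4.2}, $f(x,y)=m$ is unsolvable. The solution is then recovered exactly as in the tail of Algorithm \ref{alg2}: form $\mathfrak a=\prod_i\mathfrak q_i\prod_j\mathfrak p_j$ and apply Lemma \ref{lemma4.1}.

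The main obstacle is that the Schroeppel--Shamir heap enumeration relies on the monotonicity of integer addition --- for a fixed $[F_1]$ the sequence $[F_1]\cdot A_2[1],[F_1]\cdot A_2[2],\dots$ must be non-decreasing --- whereas Dirichlet composition followed by Gaussian reduction is not order-preserving. I would resolve this by carrying out the inner arithmetic in the coordinate model $G=\bigoplus_{k=1}^t\Z/m_k\Z$ of $\mathrm{Cl}(\OO)$ used in Algorithm \ref{alg1}: componentwise integer addition of the $[0,m_k)$-representatives, performed \emph{without} reducing modulo the $m_k$ at the two combination steps, is monotone for the lexicographic order on $\Z^t$, so the heaps do enumerate in the correct order; one then only has to run the merge against the shifted targets $T+\sum_k j_k m_k e_k$ corresponding to the bounded modular carries. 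Controlling this carry bookkeeping --- equivalently, checking that the number of invariant factors $t\le\log_2 h(D)$ and the carry ranges contribute only within the stated factors --- is the delicate point; once it is dispatched, the remaining correctness and complexity claims are a routine transcription of the analysis of Algorithm \ref{alg2}.
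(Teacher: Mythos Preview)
At the level of the paper's own argument your plan is a match: the paper's proof is literally a two-sentence pointer to Schroeppel--Shamir \cite{SS81} (plus a remark that hash tables can replace sorting), with no discussion of how the priority-queue enumeration is to be carried out in the form class group. You go further than the paper and correctly isolate the real obstacle, namely that Dirichlet composition followed by Gaussian reduction is not monotone for any total order on reduced forms, so the heap trick of \cite{SS81} does not apply verbatim.

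The gap is in your proposed resolution of that obstacle. Passing to the coordinate model $G=\bigoplus_{k=1}^{t}\Z/m_k\Z$ and iterating over carry vectors introduces a multiplicative overhead of $4^{t}$ (four possible carries per invariant factor when four unreduced representatives in $[0,m_k)$ are summed), and your bound $t\le\log_2 h(D)$ yields only $4^{t}\le h(D)^2=O(D^{1+\varepsilon})$, which destroys the stated time bound whenever $D$ is not tiny. Moreover, switching to coordinates at all forces you to compute the structure of $\mathrm{Cl}(\OO)$ and to solve $r$ discrete logarithms in it---exactly the preprocessing of Algorithm~\ref{alg1} that Algorithm~\ref{alg2} was designed to avoid---and those costs (at best $D^{1/4+\varepsilon}$ unconditionally) are likewise unaccounted for in the claimed $O(2^{\omega(m)/2}\omega(m)\log D+\log^2 m)$. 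So while the obstruction you spotted is genuine (and the paper simply glosses over it), the fix you sketch does not stay within the asserted complexity; you would need either a much sharper control on the rank $t$ of imaginary quadratic class groups, or a different adaptation of \cite{SS81} to finite abelian groups that bypasses the monotonicity requirement altogether.
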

\begin{proof}
    The algorithm is exactly like the previous one, however we use Schroeppel and Shamir's approach \cite{SS81} to reduce the space complexity from $O(2^{\omega(m)/2})$ to $O(2^{\omega(m)/4})$. Additionally instead of sorting the forms in each set of products $L_i$ we can use a hash table that takes, on average, $O(1)$-time to insert and look up, which is often more practical.
\end{proof}
\section{\textbf{Acknowledgment}}
This paper originated as a by-product of the author's research in isogeny-based cryptography, where the representation of an integer by a definite quadratic form with a small discriminant was required but could not be achieved using Cornacchia’s algorithm. To address this gap in the literature, the author developed the present work. The author sincerely thanks Lara Tarkh for valuable discussions and insights.

\end{document}